\newtheorem{thm}{Theorem}[section]
\newtheorem{lema}[thm]{Lemma}
\newtheorem{prop}[thm]{Proposition}
\theoremstyle{definition}
\theoremstyle{remark}
\newtheorem{rem}[thm]{Remark}
\numberwithin{equation}{section}
\newcommand{\R}{\mathbb R}
\newcommand{\N}{\mathbb N}
\newcommand{\lam}{\lambda}
\begin{document}

\title[Precise asymptotic of eigenvalues]{Precise asymptotic of eigenvalues of resonant quasilinear systems}

\author[J. Fern\'andez Bonder and J. P. Pinasco]{Juli\'an Fern\'andez Bonder and Juan P. Pinasco}

\address{Departamento de Matem\'atica, FCEyN - Universidad de Buenos Aires\hfill\break \indent Ciudad Universitaria, Pabell\'on I  (1428) Buenos Aires,
Argentina.}

\email[J. Fern\'andez Bonder]{jfbonder@dm.uba.ar}

\urladdr[J. Fern\'andez Bonder]{http://mate.dm.uba.ar/~jfbonder}

\email[J.P. Pinasco]{jpinasco@dm.uba.ar}

\urladdr[J.P. Pinasco]{http://mate.dm.uba.ar/~jpinasco}

\keywords{elliptic system, $p-$Laplace, eigenvalue bounds}

\subjclass[2000]{35P30, 35P15, 34L30, 34L15}

\begin{abstract}
In this work we study the sequence of variational eigenvalues of a system of resonant type involving $p-$ and $q-$laplacians on $\Omega \subset \R^N$, with a coupling term depending on two parameters $\alpha$ and $\beta$ satisfying $\alpha/p + \beta/q = 1$. We show that the order of growth of the $k^{th}$ eigenvalue depends on $\alpha+\beta$, $\lam_k = O(k^{\frac{\alpha+\beta}{N}})$.
\end{abstract}

\maketitle

\section{Introduction}

This paper is devoted to the study of the asymptotic behavior of eigenvalues of resonant quasilinear systems
\begin{equation}\label{uno}
\begin{cases}
-\Delta_p u =  \lambda \alpha |u|^{\alpha -2}u|v|^{\beta}\\
-\Delta_q v =  \lambda \beta |u|^{\alpha}|v|^{\beta-2}v,
\end{cases}\quad \mbox{in } \Omega
\end{equation}
with Dirichlet boundary condition
\begin{equation}\label{dos}
u(x) =v(x) =0 \qquad \qquad \mbox{on } \partial \Omega.
\end{equation}
Here, $\Omega \subset \R^N$ is a bounded domain with smooth boundary $\partial\Omega$, the $s-$Laplace operator is $\Delta_s u = \mbox{div}(|\nabla u|^{s-2} \nabla u)$, the exponents satisfy $1 < p,q < +\infty$, and the positive parameters $\alpha, \beta$ satisfy
\begin{equation}\label{alfabeta}
\frac{\alpha}{p}+\frac{\beta}{q}=1.
\end{equation}

The study of resonant systems has deserved a great deal of attention in recent years, and we may cite the works of Boccardo and de Figueiredo \cite{BF}, Manasevich and  Mawhin \cite{MM}, Felmer, Manasevich and de Th\`elin \cite{FMT}, Stavrakakis and Zographopoulos \cite{SZ2}, among several others.

In several applications, such as bifurcation problems, anti--maximum principles, and existence or non--existence of solutions (see for example \cite{ACl, DSZ, FMT, FGTdT, SZ2, VdT, Z}) it is desirable to have precise bounds on the eigenvalues. In general this information is not well understood for elliptic systems, except for the first or principal eigenvalue. Several properties of this first eigenvalue were analyzed (existence, uniqueness, positivity, and isolation in bounded or unbounded domains, with different boundary conditions and with or without weights) and we refer the interested reader to \cite{AH, dT, Fe, Z} among others.

Let us recall briefly that the existence of a sequence of variational eigenvalues for problem \eqref{uno}--\eqref{dos} was proved in \cite{dNPM}, and the values $\lam_k$ are defined as
$$
\lam_k := \inf _{C\in \mathcal{C}_k}\sup _{(u,v)\in C} \frac{\displaystyle \frac{1}{p}\int_{\Omega} |\nabla u|^{p}\, dx + \frac{1}{q} \int_{\Omega} |\nabla v|^{q}\, dx}{\displaystyle \int_{\Omega} |u|^{\alpha}|v|^{\beta}\, dx},
$$
where $\mathcal{C}_k$ is the class of compact symmetric ($C=-C$) subsets of $W^{1,p}_0(\Omega)\times W^{1,q}_0(\Omega)$ of (Krasnoselskii) genus greater or equal than $k$.

Throughout this work, the eigenvalues are counted repeated according to their multiplicity. We say that $\lambda_k$ has multiplicity $r$ if
$\lambda_{k-1}<\lambda_k=\lambda_{k+1}=\cdots=\lambda_{k+r-1}<\lambda_{k+r}$. In this case, it is a well know fact that the set of eigenfunctions corresponding to $\lambda_k$ has genus greater or equal than $r$ (see, for instance, \cite{G-A}).

In the case of a single equation the existence of a sequence of variational eigenvalues together with the correct order of growth for these eigenvalues was first obtained by Garcia-Azorero and Peral in \cite{G-A}. The constants in the asymptotic behavior were improved by Friedlander in \cite{Fr}. Let us note that for the one-dimensional problem, these bounds can be refined, see \cite{FBP1, FBP2}.

As for elliptic systems, the asymptotic growth of the eigenvalues is less understood even in the linear case. We may cite here the work of Protter \cite{Pr}, and also the works of Cantrell and Cosner \cite{Can, CC, Cos} were lower bounds for the first eigenvalue were obtained. The exception for the lack of results in this direction comes from linear and nonlinear elasticity theory (see the survey of Antman \cite{Ant}).

For nonlinear elliptic systems, up to our knowledge, the first work where this problem was addressed was \cite{dNP} where the authors obtained a generalization of the Lyapunov inequality together with an upper bound of the variational eigenvalues in terms of the ones of a single $p-$Laplace equation for the one dimensional case. Later, in \cite{FBP} we obtained lower and upper bounds for the eigenvalues of problem \eqref{uno}--\eqref{dos} in terms of the eigenvalues of a single $p-$Laplace and $q-$Laplace equations in any dimension $N \ge 1$. More precisely, for each $k$ we
prove that
$$
c \left(\frac{k}{|\Omega|}\right)^{\frac{q}{N}} \le \lam_k \le C \left(\frac{k}{|\Omega|}\right)^{\frac{p}{N}}
$$
for suitable positive constants $c, C$ depending on $p$ and $q$.

We refer the interested reader to the introductions of \cite{dNP, FBP} for more information and references about the eigenvalues of quasilinear elliptic equations and resonant systems.

However, our previous bounds fail to reflect the coupling strength of the system which is given by the parameters $\alpha$ and $\beta$. Formally, by taking $\alpha = p$ and $\beta = 0$, we obtain a single $p-$Laplace equation replacing the system, namely
$$
-\Delta_p u =  \lambda p |u|^{p -2}u,
$$
and similarly, for $\alpha=0$ and $\beta = q$ we have
$$
-\Delta_q v =  \lambda q|v|^{q-2}v.
$$
Hence, the order of growth of our upper (resp., lower) bound given in \cite{FBP} is sharp for the case $\alpha =p$ (resp., $\beta = q$), since coincides with the true upper (resp., lower) order of growth of the eigenvalues (see \cite{G-A}). On the other hand, both
orders does not hold simultaneously even for those limit cases.

We can suspect that there exists a smooth transition for the order of growth of the eigenvalues between both limiting cases, and the main result of this work is to prove it. Namely, our main theorem is
\begin{thm}\label{orden}
Let $\{\lam_k\}_{k\in\N}$ be the sequence of variational eigenvalues of problem \eqref{uno}--\eqref{dos}. Then, there exist positive constants $c < C$ depending on $p$, $q$, $\alpha$, $\beta$ and $N$, such that
$$
c \left(\frac{k}{|\Omega|}\right)^{\frac{\alpha+\beta}{N}} \le \lam_k \le C\left(\frac{k}{|\Omega|}\right)^{\frac{\alpha+\beta}{N}}
$$
when $k\to \infty$.
\end{thm}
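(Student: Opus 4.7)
The plan is to derive both inequalities by reducing the coupled Rayleigh quotient $R(u,v)$ to the single-operator Rayleigh quotients of the $p$- and $q$-Laplacians, whose variational eigenvalues $\mu_k^{(s)}$ are known by Garc\'ia-Azorero--Peral \cite{G-A} to satisfy $\mu_k^{(s)}\asymp(k/|\Omega|)^{s/N}$ for $s\in\{p,q\}$. The exponent $(\alpha+\beta)/N$ will appear as the unique one that reconciles these two single-operator asymptotics through the coupling $\alpha/p+\beta/q=1$.

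For the upper bound, I would split (a subdomain of) $\Omega$ into $k$ disjoint cubes $Q_i$ of side $h\sim(|\Omega|/k)^{1/N}$, fix smooth bumps $\phi,\psi$ on the unit cube, and set $u_i(x):=h\,\phi((x-x_i)/h)$, $v_i(x):=h\,\psi((x-x_i)/h)$, so that the amplitude is tuned to the length scale. The candidate set of genus $\ge k$ is the image of the \emph{non-linear} odd map
\[
F(t):=\Bigl(\sum_i t_i\,u_i,\ \sum_i \mathrm{sgn}(t_i)\,|t_i|^{p/q}\,v_i\Bigr),\qquad t\in S^{k-1}.
\]
Disjoint supports collapse the three integrals appearing in $R$ to $\sum|t_i|^{p}$, $\sum|t_i|^{(p/q)q}=\sum|t_i|^{p}$ and $\sum|t_i|^{\alpha+(p/q)\beta}$; the last exponent equals $p$ precisely because $\alpha/p+\beta/q=1$. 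Hence $R\circ F$ is \emph{constant} on $S^{k-1}$ and of order $h^{-(\alpha+\beta)}$, while genus monotonicity of the Krasnoselskii genus under odd continuous maps (together with $0\notin F(S^{k-1})$, as the $u_i$ have disjoint supports) gives $\gamma(F(S^{k-1}))\ge k$, hence $\lambda_k\le C(k/|\Omega|)^{(\alpha+\beta)/N}$.

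For the lower bound I would use H\"older's inequality with conjugate exponents $p/\alpha,q/\beta$, namely $\int|u|^\alpha|v|^\beta\le\|u\|_p^\alpha\|v\|_q^\beta$, combined with the weighted AM--GM $\tfrac1p a+\tfrac1q b\ge c_0\,a^{\alpha/p}b^{\beta/q}$ applied to $a=\|\nabla u\|_p^p$, $b=\|\nabla v\|_q^q$, obtaining the pointwise bound
\[
R(u,v)\ge c_0\,\mu_p(u)^{\alpha/p}\mu_q(v)^{\beta/q},\qquad \mu_p(u):=\frac{\|\nabla u\|_p^p}{\|u\|_p^p},\ \mu_q(v):=\frac{\|\nabla v\|_q^q}{\|v\|_q^q}.
\]
Given $C\in\mathcal{C}_k$ with $\Lambda:=\sup_C R<\infty$, I would pick thresholds $M_p,M_q>0$ on the hypersurface $M_p^{\alpha/p}M_q^{\beta/q}=\Lambda/c_0$ and cover $C=C_A\cup C_B$ with $C_A:=\{(u,v)\in C:\mu_p(u)\le M_p\}$ and $C_B:=\{(u,v)\in C:\mu_q(v)\le M_q\}$ (this covering is forced by the above bound). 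The projections $\pi_u(C_A),\pi_v(C_B)$ are compact, symmetric, bounded away from $0$ (finite $R$ rules out $u,v\equiv 0$), with Rayleigh quotients bounded by $M_p,M_q$; by the minimax definition of $\mu_k^{(s)}$ and the single-operator asymptotics of \cite{G-A} this gives $\gamma(C_A)\le C|\Omega|M_p^{N/p}$ and $\gamma(C_B)\le C|\Omega|M_q^{N/q}$. Subadditivity of genus then yields $k\le C|\Omega|(M_p^{N/p}+M_q^{N/q})$, and the right-hand side is minimised on the constraint surface by balancing $M_p^{N/p}=M_q^{N/q}=(\Lambda/c_0)^{N/(\alpha+\beta)}$, leading exactly to $\Lambda\ge c(k/|\Omega|)^{(\alpha+\beta)/N}$.

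The main obstacle is the non-linear parametrisation in the upper bound: the exponent $p/q$ on $|t_i|$ in the $v$-component is dictated by the homogeneity condition that makes the three $t$-sums above coincide, and one has to check that $F$ is continuous across the hyperplanes $t_i=0$ (immediate, since $p/q>0$) and that its image has genus at least~$k$. A secondary but essential point in the lower bound is translating the genus estimate for $C_A$ into a counting-function bound $\gamma(C_A)\le N_p(M_p):=\#\{j:\mu_j^{(p)}\le M_p\}$; this step relies on $\pi_u(C_A)$ being compact, symmetric and away from $0$, together with continuity of $\mu_p$ on $W_0^{1,p}(\Omega)\setminus\{0\}$, so that the minimax characterisation of $\mu_k^{(p)}$ can be applied directly to the projected set.
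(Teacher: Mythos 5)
Your proposal is correct, but it takes a genuinely different route from the paper's. The paper works with the spectral counting function $N(\lam)$: it tiles $\Omega$ by cubes of side $t$, proves the scaling identities $\lam_1^t=t^{-(\alpha+\beta)}\lam_1^1$ and $\mu_2^t=t^{-(\alpha+\beta)}\mu_2^1$ for the first Dirichlet and second Neumann eigenvalues of the \emph{system} on cubes, and then invokes Dirichlet--Neumann bracketing, with each interior cube contributing $N^D(\lam,Q_t)=2$ (the first eigenvalue has multiplicity two) and each covering cube contributing $N^N(\lam,Q_t)=1$. Your upper bound replaces the bracketing by a direct genus-$k$ test set; the amplitude scaling $u_i=h\,\phi((x-x_i)/h)$ is exactly the scaling behind the paper's Lemma 3.1, and the odd reparametrisation $t_i\mapsto\mathrm{sgn}(t_i)|t_i|^{p/q}$ correctly makes $R\circ F$ constant since $\alpha+\beta p/q=p$. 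The lower bound is where the approaches really diverge: instead of the Neumann side of the bracketing you decouple the system via H\"older (with conjugate exponents $p/\alpha$, $q/\beta$) and weighted AM--GM to get $R(u,v)\ge c_0\,\mu_p(u)^{\alpha/p}\mu_q(v)^{\beta/q}$, then combine genus subadditivity with the scalar Weyl law of \cite{G-A}; the balanced choice $M_p^{N/p}=M_q^{N/q}=(\Lambda/c_0)^{N/(\alpha+\beta)}$ indeed produces the exponent $(\alpha+\beta)/N$. Your route buys two things: it bypasses the Neumann eigenvalue problem for the system entirely (no analogue of Lemma 3.2 or of the bracketing proposition is needed), and it makes visible that the exponent is forced by the homogeneity $\alpha/p+\beta/q=1$ on both sides. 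What it gives up is sharpness of constants: the AM--GM and H\"older steps lose fixed factors, so unlike the paper's bracketing scheme your argument could not be refined to the conjectured Weyl asymptotic of Section 6, and the constants are expressed through the scalar eigenvalues rather than through $\lam_1^1(p,q)$ and $\mu_2^1(p,q)$, which the paper goes on to estimate explicitly. Two points you should spell out when writing this up: for $C\in\mathcal{C}_k$ with $\sup_C R<\infty$ one has $u\neq0$ and $v\neq0$ throughout $C$, so the projections $\pi_u(C_A)$, $\pi_v(C_B)$ are compact, symmetric and bounded away from the origin, hence admissible in the scalar minimax; and for large $k$ a lattice count (as in the paper) shows one can place $k$ disjoint cubes of side $h\sim(|\Omega|/k)^{1/N}$ inside $\Omega$.
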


Observe that if one consider linear operators, or even the same $p-$Laplace operator in both equations (i.e. $p=q$), the coupling parameters $\alpha$ and $\beta$ are not reflected in the asymptotics of the eigenvalues since $\alpha+\beta=p$ in this case. We believe that this fact may be the reason why this phenomenum was not discovered earlier.

The proof of Theorem \ref{orden} follows directly from  Weyl--type bounds for the spectral counting function $N(\lam)$ which gives the number of eigenvalues less than a given value, that is
$$
N(\lam) = \#\{ k : \lam_k \le \lam\}.
$$
Theorem \ref{orden} is equivalent to the following asymptotic bound for $N(\lam)$:
$$
C^{-1} \lam^{\frac{N}{\alpha+\beta}} \le N(\lam)  \le  c^{-1} \lam^{\frac{N}{\alpha+\beta}}.
$$

Up to our knowledge, this is the first case in the literature where the coupling parameters of an elliptic system appear explicitly modifying the power on the asymptotic order of growth of the eigenvalues.

\subsection{Organization of the paper}
The paper is organized as follows. In Section \S 2 we review some facts about the eigenvalue problem for the single $s-$Laplace equation in one space dimension
$$
-(|u^{\prime}|^{s-2}u^{\prime})^{\prime} =
|u|^{s-2}u,
$$
which can be found for example in \cite{DDM}, or \cite{DM}.

In Section \S 3 we prove Theorem \ref{orden} by using a scaling argument as in \cite{Fr}. The main drawback of this approach is the fact that the constant on the asymptotic expansion remains unknown since they depend on the first Dirichlet eigenvalue and the second Neumann eigenvalue of problem \eqref{uno}--\eqref{dos} when $\Omega$ is the unit cube $Q_1$.

So, we are left with the problem of finding lower and upper bounds for those eigenvalues, which is a problem of independent interest. Thus, we consider the one--dimensional problem
\begin{equation}\label{ode-sys}
\left\{
\begin{array}{rcl}
-(|u^{\prime}(x)|^{p-2}u^{\prime}(x))^{\prime} & = &
\lam\alpha |u|^{\alpha-2}u|v|^{\beta},\\
-(|v^{\prime}(x)|^{q-2}v^{\prime}(x))^{\prime} & = & \lam \beta
|u|^{\alpha}|v|^{\beta-2}v,
\end{array}\right.
\end{equation}
on the interval $(a,b)$, and we will focus on the Dirichlet boundary condition.

In Section \S 4 we prove the following lower bound for the first Dirichlet eigenvalue $\lam_1(p,q)$ of the one dimensional problem \eqref{ode-sys}:

\begin{thm}\label{lower}
Let $\Lambda_1(s)$ be the first Dirichlet eigenvalue of
$$
-(|\varphi^{\prime}|^{s-2}\varphi^{\prime})^{\prime} =
\Lambda |\varphi|^{s-2}\varphi
$$
on $(a,b)$. Then,
$$
\left(\alpha^{\frac{\alpha}{p}}\beta^{\frac{\beta}{q}}\right)^{-1} \left(\frac{2} {\pi_{\alpha+\beta}}\right)^{\alpha + \beta}  \frac{\Lambda_1(\alpha+\beta)}{\alpha+\beta-1} \le \lam_1(p,q).
$$
\end{thm}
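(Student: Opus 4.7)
The plan is to split the proof into two stages: first, convert the additive Rayleigh quotient for $\lam_1(p,q)$ into a multiplicative expression via Young's inequality, and second, estimate that expression below by a pointwise $L^\infty$ bound combined with H\"older's inequality. Throughout, write $L=b-a$ and for admissible $(u,v)$ with $u,v\not\equiv 0$ set
$$A=\int_a^b |u'|^p\,dx,\qquad B=\int_a^b|v'|^q\,dx,\qquad C=\int_a^b|u|^{\alpha}|v|^{\beta}\,dx.$$

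For stage one, apply the weighted arithmetic--geometric mean inequality with weights $\alpha/p$ and $\beta/q$, which sum to $1$ by \eqref{alfabeta}, to the numbers $A/\alpha$ and $B/\beta$:
$$\frac{A}{p}+\frac{B}{q}=\frac{\alpha}{p}\cdot\frac{A}{\alpha}+\frac{\beta}{q}\cdot\frac{B}{\beta}\ge\left(\frac{A}{\alpha}\right)^{\alpha/p}\left(\frac{B}{\beta}\right)^{\beta/q}=\frac{A^{\alpha/p}B^{\beta/q}}{\alpha^{\alpha/p}\beta^{\beta/q}}.$$

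For stage two, since $u(a)=u(b)=0$ one has the elementary pointwise estimate $2|u(x)|\le\int_a^b|u'|$, and likewise for $v$. Integrating $|u|^{\alpha}|v|^{\beta}\le\|u\|_\infty^{\alpha}\|v\|_\infty^{\beta}$ then yields
$$C\le L\cdot 2^{-(\alpha+\beta)}\left(\int_a^b|u'|\right)^{\alpha}\left(\int_a^b|v'|\right)^{\beta}.$$
Applying H\"older's inequality in each factor to pass from $L^1$ to $L^p$ (respectively $L^q$) introduces the weights $L^{1-1/p}$ and $L^{1-1/q}$, and the total exponent of $L$ becomes $1+\alpha(1-1/p)+\beta(1-1/q)=\alpha+\beta$, once more by \eqref{alfabeta}. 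The net upper bound is
$$C\le\left(\frac{L}{2}\right)^{\alpha+\beta}A^{\alpha/p}B^{\beta/q}.$$

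Combining the two inequalities gives, for every admissible pair, $(A/p+B/q)/C\ge(2/L)^{\alpha+\beta}/(\alpha^{\alpha/p}\beta^{\beta/q})$, whence
$$\lam_1(p,q)\ge\frac{1}{\alpha^{\alpha/p}\beta^{\beta/q}}\left(\frac{2}{L}\right)^{\alpha+\beta}.$$
The one-dimensional eigenvalue identity $\Lambda_1(s)=(s-1)(\pi_s/L)^s$ recalled in Section~\S 2 then rewrites $(2/L)^{\alpha+\beta}$ as $(2/\pi_{\alpha+\beta})^{\alpha+\beta}\Lambda_1(\alpha+\beta)/(\alpha+\beta-1)$, which is exactly the constant in the statement. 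The main subtlety is the double use of the resonance condition \eqref{alfabeta}: once to trigger the weighted AM--GM, and once to make the various powers of $L$ produced by H\"older collapse to precisely $\alpha+\beta$.
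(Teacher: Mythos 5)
Your proof is correct, and it reaches the paper's key intermediate inequality
$\lam_1(p,q)\ge \bigl(\alpha^{\alpha/p}\beta^{\beta/q}\bigr)^{-1}(2/(b-a))^{\alpha+\beta}$
by a genuinely different route. The paper obtains this by citing the Lyapunov inequality for systems (Theorem 1.5 of \cite{dNP}): it assumes a positive eigenfunction pair solving \eqref{ode-sys}, substitutes $f=\alpha\lam_1$, $g=\beta\lam_1$ into \eqref{lyapsys}, and simplifies the exponent of $b-a$ using \eqref{alfabeta}. You instead work entirely at the level of the Rayleigh quotient \eqref{varsist}: the weighted AM--GM step turns the additive numerator $A/p+B/q$ into the multiplicative quantity $A^{\alpha/p}B^{\beta/q}/(\alpha^{\alpha/p}\beta^{\beta/q})$, and the $2\|u\|_\infty\le\int|u'|$ plus H\"older chain bounds the denominator by $(L/2)^{\alpha+\beta}A^{\alpha/p}B^{\beta/q}$ --- which is in effect an inline proof of the relevant case of the Lyapunov inequality, transplanted from solutions of the ODE system to arbitrary admissible test pairs. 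The final conversion of $(2/L)^{\alpha+\beta}$ into the stated constant via $\Lambda_1(s)=(s-1)\pi_s^s/L^s$ is identical in both arguments. What each approach buys: the paper's version is shorter given the cited lemma; yours is self-contained, avoids any appeal to the existence and positivity of a first eigenfunction of the system (the hypothesis of Theorem \ref{lyap}), and makes transparent that the resonance condition \eqref{alfabeta} enters exactly twice --- once as the weight normalization in Young's inequality and once to collapse the powers of $L$. Both are complete proofs of the statement.
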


The proof follows by using the Lyapunov inequality obtained in \cite{dNP}.

\medskip

In \cite{dNP} we obtained an upper bound of the first eigenvalue of the one dimensional problem \eqref{ode-sys} in terms of the first eigenvalue of the single $p-$Laplace equation. Moreover, upper bounds were obtained for all the variational eigenvalues, namely
\begin{equation}\label{cotita}
\lam_k\leq \frac{\Lambda_{k}(p)}{p}\Big[ 1  +  \left( \frac{p}{q}\right)^{q+1}
\big(\Lambda_{k}(p)\big)^{(q-p)/p}\Big].
\end{equation}
Here, $\Lambda_{k}(p)$ stands for the $k^{th}$ eigenvalue of the $p-$Laplace with Dirichlet boundary conditions. Let us note that \eqref{cotita} holds for the one dimensional problem. In the $N-$dimensional case, \eqref{cotita} holds only for the first eigenvalue.

In this paper we improve this explicit upper bound for the first eigenvalue, and we use it to obtain asymptotic bounds for the $k^{th}$ eigenvalue of a system in $\Omega\subset\R^N$ depending on the eigenvalues of the $(\alpha+\beta)-$Laplace operator.

\medskip

Section \S 5 is devoted to the proof of the following upper bound of the first Dirichlet eigenvalue of problem \eqref{ode-sys}:
\begin{thm}\label{upper}
Assume $q<p$. Then, with the notation of Theorem \ref{lower}, we have,
$$
\lam_1(p,q)\leq \Lambda_1(\alpha+\beta) \left[ \frac{1}{p} +\frac{1}{q} \left( \frac{\pi_{\alpha+\beta}}{(\alpha+\beta-1) \int_0^{\pi_{\alpha+\beta}} \sin_{\alpha+\beta}^{\alpha+\beta}(t)dt}  \right)^{1-\frac{q}{\alpha+\beta}}\right].
$$
\end{thm}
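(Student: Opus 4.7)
The plan is to apply the variational characterization of $\lam_1(p,q)$ to a single test pair $u=v=\phi$, where $\phi$ is a carefully renormalized first Dirichlet eigenfunction of the $s$-Laplacian on $(a,b)$ with $s=\alpha+\beta$. Specifically, take
$$\phi(x) = \frac{b-a}{\pi_s}\,\sin_s\!\left(\frac{\pi_s(x-a)}{b-a}\right),$$
so that $\phi'(x)=\cos_s\!\left(\pi_s(x-a)/(b-a)\right)$ and in particular $|\phi'(x)|\le 1$ everywhere, as a direct consequence of the conservation law $|\sin_s'|^s+|\sin_s|^s\equiv 1$ recalled in Section~2. This substitution yields
$$\lam_1(p,q)\le \frac{\tfrac{1}{p}\int_a^b|\phi'|^p\,dx+\tfrac{1}{q}\int_a^b|\phi'|^q\,dx}{\int_a^b \phi^{\alpha+\beta}\,dx}.$$

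The two numerator integrals are then handled in opposite ways. The hypothesis $q<p$ combined with $\alpha/p+\beta/q=1$ forces $q<s<p$. Since $p>s$ and $|\phi'|\le 1$, the pointwise inequality $|\phi'|^p\le|\phi'|^s$ gives $\int|\phi'|^p\le\int|\phi'|^s$. Since $q<s$, H\"older's inequality yields
$$\int_a^b|\phi'|^q\,dx\le (b-a)^{1-q/s}\left(\int_a^b|\phi'|^s\,dx\right)^{q/s}.$$
All remaining integrals reduce, via the substitution $\tau=\pi_s(x-a)/(b-a)$, to $\int_0^{\pi_s}\sin_s^s\,dt$ and $\int_0^{\pi_s}|\cos_s|^s\,dt$, which by the same conservation law equal $\pi_s/s$ and $(s-1)\pi_s/s$ respectively.

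Plugging the estimates into the quotient, using the explicit formula $\Lambda_1(s)=(s-1)(\pi_s/(b-a))^s$, and rewriting $\pi_s/((s-1)\int_0^{\pi_s}\sin_s^s\,dt)=s/(s-1)$ inside the $q$-term, every occurrence of $b-a$ cancels and the bound takes exactly the stated form. The main obstacle is the amplitude normalization of $\phi$: the ``obvious'' choice $u=v=\sin_s(\pi_s(x-a)/(b-a))$ introduces spurious factors $(\pi_s/(b-a))^{p-s}$ and $(\pi_s/(b-a))^{q-s}$ which are incompatible with the $(b-a)^{-s}$ homogeneity of $\Lambda_1(s)$. Rescaling the amplitude by $(b-a)/\pi_s$ — equivalently, demanding $|\phi'|\le 1$ — is precisely what enables the $L^\infty$ estimate $|\phi'|^p\le|\phi'|^s$ (indispensable for $p>s$, where no H\"older inequality goes in the needed direction) to deliver the clean first term $\Lambda_1(s)/p$.
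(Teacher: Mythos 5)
Your proposal is correct and follows essentially the same route as the paper's proof: the identical test pair $u=v=\frac{L}{\pi_s}\sin_s(\pi_s x/L)$, the pointwise estimate $|\phi'|^p\le|\phi'|^s$ for the $p$-term (using $s<p$ and $|\phi'|\le 1$ from the conservation law), and H\"older's inequality for the $q$-term. The only difference is that you go one step further and evaluate $\int_0^{\pi_s}\sin_s^{s}(t)\,dt=\pi_s/s$ --- which requires not just the conservation law but also the integration-by-parts identity $\int_0^{\pi_s}|\sin_s'|^{s}\,dt=(s-1)\int_0^{\pi_s}\sin_s^{s}\,dt$ --- so that the bracketed constant simplifies to $\left(\frac{s}{s-1}\right)^{1-q/s}$, whereas the paper leaves the integral as an unevaluated constant $K(s)$.
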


Here $\sin_s(x)$ is defined implicitely as
$$
x = \int_0^{\sin_s(x)} \frac{dt}{(1-t^s)^{1/s}}
$$
and $\pi_s$ is given by
$$
\pi_s = 2\int_0^1 \frac{dt}{(1-t^s)^{1/s}}.
$$
See section \S 2 for more details and see also the paper \cite{DDM}.

For the $N-$dimensional problem we prove the following
\begin{thm}\label{upper-N}
Let $\lambda_1(p,q)$ be the first eigenvalue of \eqref{uno} in the unit cube $Q_1\subset \R^N$. Assume again that $q<p$. Then we have the explicit upper bound
$$
\lambda_1(p,q)\le C\left( \frac{N(s-1)}{p}\pi_s^p + (s-1)^{q/s}\pi_s^q \Big(\int_0^1 |\phi_s(t)|^s\, dt\Big)^{-N(1-q/s)}\right),
$$
where $s=\alpha + \beta$, $C = \max\{1; N^{(p-2)/2}; N^{(q-2)/2}\}$ and $\phi_s(t)$ is the first eigenfunction of the one dimensional $s-$Laplace operaton with Dirichlet boundary conditions in the unit interval, that is
$$
\phi_s(t) = \sin_s(\pi_s t).
$$
\end{thm}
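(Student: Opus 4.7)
The strategy is to plug a carefully chosen tensor-product test function into the variational characterization
$$
\lambda_1(p,q)\le \frac{\frac{1}{p}\int_{Q_1}|\nabla u|^p\,dx + \frac{1}{q}\int_{Q_1}|\nabla v|^q\,dx}{\int_{Q_1}|u|^{\alpha}|v|^{\beta}\,dx}.
$$
I would take $u=v=\phi$, where $\phi(x)=\prod_{i=1}^N \phi_s(x_i)$ is the tensor product of the one-dimensional $s$-Laplace eigenfunction $\phi_s$, with $s=\alpha+\beta$. The hypothesis $q<p$ together with $\alpha/p+\beta/q=1$ immediately forces $q\le s\le p$, and this one-sided relation is what makes the two-term bound natural: the $p$-integral gets a pointwise bound $|\phi_s'|\le \pi_s$, while the $q$-integral gets a H\"older bound relative to $s$.

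By Fubini the denominator equals $\bigl(\int_0^1 \phi_s^s\, dt\bigr)^N$. For the gradient, writing $|\nabla\phi|^2=\sum_j a_j^2$ with $a_j(x)=\phi_s'(x_j)\prod_{i\neq j}\phi_s(x_i)$, I would apply the elementary inequality
$$
\Bigl(\sum_j a_j^2\Bigr)^{r/2}\le \max\{1,N^{(r-2)/2}\}\sum_j a_j^r,
$$
(the case $r\le 2$ from the subadditivity of $t\mapsto t^{r/2}$, the case $r\ge 2$ from the power-mean inequality) with $r=p$ and $r=q$. Integrating and using Fubini again yields
$$
\int_{Q_1}|\nabla\phi|^r\,dx\le N\max\{1,N^{(r-2)/2}\}\Bigl(\int_0^1|\phi_s'|^r\,dt\Bigr)\Bigl(\int_0^1\phi_s^r\,dt\Bigr)^{N-1},
$$
and it is precisely this step that supplies the constant $C=\max\{1,N^{(p-2)/2},N^{(q-2)/2}\}$.

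The one-dimensional factors are now estimated asymmetrically. For $r=p\ge s$, the pointwise bound $|\phi_s'|\le \pi_s$ gives $\int|\phi_s'|^p\le \pi_s^{p-s}\int|\phi_s'|^s$; combining with the eigenvalue identity $\int|\phi_s'|^s=\Lambda_1(s)\int\phi_s^s=(s-1)\pi_s^s\int\phi_s^s$ from Section~2 this becomes $\int|\phi_s'|^p\le (s-1)\pi_s^p\int\phi_s^s$, and $\phi_s\le 1$ with $p\ge s$ gives $\int\phi_s^p\le \int\phi_s^s$. For $r=q\le s$, H\"older's inequality on $(0,1)$ (of measure one) yields $\int|\phi_s'|^q\le \bigl(\int|\phi_s'|^s\bigr)^{q/s}=(s-1)^{q/s}\pi_s^q\bigl(\int\phi_s^s\bigr)^{q/s}$ and $\int\phi_s^q\le \bigl(\int\phi_s^s\bigr)^{q/s}$. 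Substituting and dividing by the denominator, the $p$-term reduces to a constant multiple of $N(s-1)\pi_s^p/p$, while in the $q$-term the factors $\bigl(\int\phi_s^s\bigr)^{q/s}$ from $\int|\phi_s'|^q$ and $\bigl(\int\phi_s^s\bigr)^{(N-1)q/s}$ from $\bigl(\int\phi_s^q\bigr)^{N-1}$ combine into $\bigl(\int\phi_s^s\bigr)^{Nq/s}$; dividing by $\bigl(\int\phi_s^s\bigr)^N$ then produces the stated exponent $-N(1-q/s)$. The main bookkeeping difficulty is this H\"older tracking in the second term; the rest is mechanical once one has decided on the test function and the correct side of the inequality to use in each regime.
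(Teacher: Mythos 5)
Your proposal is correct and follows essentially the same route as the paper: the paper packages the $\ell^2$--versus--$\ell^r$ comparison of the gradient as a reduction to the pseudo-$s$-Laplacian eigenvalue $\nu_1$, but that reduction is exactly your pointwise inequality $\bigl(\sum_j a_j^2\bigr)^{r/2}\le \max\{1,N^{(r-2)/2}\}\sum_j|a_j|^r$, and your choice of the tensor-product test function, the Fubini factorizations, and the asymmetric treatment of the $p$- and $q$-terms (pointwise bound $|\phi_s'|\le\pi_s$ for $r=p\ge s$, H\"older for $r=q\le s$) all coincide with the paper's computation. One remark: your careful bookkeeping shows that the second term actually carries a prefactor $N/q$, which the paper's final display (and hence the theorem as stated) silently drops; the bound your argument yields, with that factor included, is the one the computation really gives.
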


\medskip

Finally, in Section \S 6 we close this paper with some remarks on possible improvements on our result and some open questions.

\section{Some known facts}

In this Section we recall some previous results which will be needed in the rest of the paper.

\subsection{Variational setting}

The variational characterization of eigenvalues follows from the abstract theory developed by H. Amann (see \cite{A}). In \cite{DM} the authors showed the existence of infinitely many eigenpairs, i. e.
$(u,v) \in W^{1,p}_0(\Omega)\times W^{1,q}_0(\Omega)$ and $\lam\in \R$ such that
$$
\int_{\Omega} |\nabla u|^{p-2} \nabla u \nabla \varphi + |\nabla v|^{q-2} \nabla v \nabla \psi\, dx =
\lambda \int_{\Omega} ( \alpha |u|^{\alpha-2} u \phi |v|^\beta + \beta
|u|^\alpha |v|^{\beta-2} v \psi )\, dx
$$
for any test-function pair $(\phi,\psi)\in W^{1,p}_0(\Omega)\times W^{1,q}_0(\Omega)$.

It is convenient to work with the variational characterization of the eigenvalues, defined through the Rayleigh quotient,
\begin{equation}\label{varsist}
\lam_k=\inf _{C\in \mathcal{C}_k} \sup _{(u,v)\in C}
\frac{\displaystyle\frac{1}{p}\int_{\Omega}|\nabla u|^{p}\, dx +
\frac{1}{q}\int_{\Omega}|\nabla v|^{q}\, dx} {\displaystyle\int_{\Omega}|u|^{\alpha}|v|^{\beta}\, dx},
\end{equation}
where $\mathcal{C}_k$ is the class of compact symmetric ($C=-C$) subsets of $W^{1,p}_0(\Omega)\times W^{1,q}_0(\Omega)$ of (Krasnoselskii) genus greater or equal that $k$.

\subsection{One dimensional case}

For the one dimensional $s-$Laplace operator in $\Omega=(0,L)$
\begin{equation}\label{unaec}
-(|u'|^{s -2} u')' =  \Lambda  |u|^{s -2}u
\end{equation}
with Dirichlet boundary conditions, we have
\begin{equation}\label{formuvar}
\Lambda_{k}(s) = \inf _{C\in \mathcal{C}_k} \sup _{u\in C} \frac{\displaystyle\int_a^b |u'|^s\, dx}{\displaystyle \int_a^b |u|^s\, dx},
\end{equation}
with  $u\in W_0^{1,s}(0,L)$.

Here, all the eigenvalues and eigenfunctions can be found explicitly (see \cite{DM}):

\begin{thm}[Del Pino, Drabek and Manasevich, \cite{DDM}]\label{draman}
The eigenvalues $\Lambda_{k}(s)$ and eigenfunctions $u_{s,k}$ of equation \eqref{unaec} on the interval $[0,L]$ are given by
$$
\Lambda_{k}(s) = (s-1)\frac{\pi_s^s k^s}{L^s},
$$
$$
u_{s,k}(x) = \sin_s(\pi_s kx/L).
$$
\end{thm}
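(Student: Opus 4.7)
My strategy is first to derive the ODE satisfied by $\sin_s$ directly from its implicit definition, then to scale to obtain the claimed eigenpairs, and finally to use a quasilinear Pr\"ufer transformation to show no others can occur.

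Differentiating the defining relation $x = \int_0^{\sin_s(x)} (1-t^s)^{-1/s}\, dt$ on $[0, \pi_s/2]$ yields $\sin_s'(x) = (1 - \sin_s^s(x))^{1/s}$, so $y := \sin_s$ satisfies the Pythagorean-type identity $(y')^s + y^s = 1$. A second differentiation gives $s(y')^{s-1} y'' = -s\, y^{s-1} y'$; on the quarter period where $y' > 0$ one divides by $y'$ and rewrites the result as
\begin{equation*}
-\bigl((y')^{s-1}\bigr)' = (s-1)\, y^{s-1}.
\end{equation*}
Extending $\sin_s$ to $\R$ in the natural odd, $2\pi_s$-periodic fashion (with $\sin_s(\pi_s - x) = \sin_s(x)$) upgrades this to $-(|y'|^{s-2}y')' = (s-1)|y|^{s-2}y$ on all of $\R$. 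Setting $\cos_s := \sin_s'$ one also reads off the companion formula $(|\cos_s|^{s-2}\cos_s)' = -(s-1)|\sin_s|^{s-2}\sin_s$, to be used below. For the candidate $u(x) := \sin_s(\pi_s kx/L)$ with $c := \pi_s k/L$, the chain rule now gives
\begin{equation*}
-(|u'|^{s-2}u')'(x) = c^s \bigl[-(|\cos_s|^{s-2}\cos_s)'(cx)\bigr] = (s-1)\, c^s\, |u(x)|^{s-2}u(x),
\end{equation*}
so $u$ is an eigenfunction with eigenvalue $\Lambda = (s-1)(\pi_s k/L)^s$; the Dirichlet conditions hold because $\sin_s$ vanishes precisely at integer multiples of $\pi_s$.

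To prove that no other eigenpairs occur, I would set up the quasilinear Pr\"ufer transformation: for any eigenfunction with eigenvalue $\Lambda > 0$, let $\omega := ((s-1)/\Lambda)^{1/s}$ and define $(R, \theta)$ by $u = R \sin_s\theta$ and $\omega u' = R \cos_s\theta$, a representation made well-defined by the Pythagorean identity. A direct computation using the equation together with the companion formula derived above collapses the angular equation to $\theta'(x) \equiv 1/\omega$, so $\theta(x) = x/\omega + \theta_0$. After normalising $\theta_0 = 0$, the Dirichlet condition $u(L) = 0$ forces $L/\omega = k\pi_s$ for some $k \in \N$, yielding $\Lambda = (s-1)(\pi_s k/L)^s$. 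Uniqueness for the Cauchy problem associated with \eqref{unaec} then forces $u$ itself to be a scalar multiple of $\sin_s(\pi_s kx/L)$, whence each $\Lambda_k(s)$ is simple and the stated list is exhaustive.

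The main obstacle is the angular computation in the Pr\"ufer step: the apparent miracle that $\theta'$ is constant reflects the pure-power nonlinearity and scaling invariance of the problem, and verifying it cleanly requires careful use of both the Pythagorean identity for $(\sin_s, \cos_s)$ and the companion differentiation formula established in the first step. Once that algebra is in place, the remaining count is a standard one-dimensional Sturm-type oscillation and uniqueness argument.
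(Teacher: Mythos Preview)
The paper does not supply its own proof of this statement: Theorem~\ref{draman} is quoted verbatim from \cite{DDM} (and \cite{DM}) as background, and the surrounding text only records consequences (the Pythagorean identity~\eqref{pyt}, the constant $K(s)$) that are used later. So there is no in-paper argument to compare your proposal against.

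That said, your sketch is essentially the classical route taken in the cited sources. The first half---deriving $-(|y'|^{s-2}y')'=(s-1)|y|^{s-2}y$ from the implicit definition and then scaling---is exactly how $\sin_s$ is introduced in \cite{DDM,DM}, and the paper itself records this when it calls $\sin_s$ ``the solution of the initial value problem'' with data $u(0)=0$, $u'(0)=1$. For completeness: the Pr\"ufer transform you set up does collapse to $\theta'\equiv 1/\omega$, and once that is known the energy identity $(s-1)|u'|^s+\Lambda|u|^s=\mathrm{const}$ (obtained by multiplying the equation by $u'$) forces the amplitude $R$ to be constant directly, so you can bypass the appeal to Cauchy uniqueness at the end---a point worth noting since the equation is degenerate at zeros of $u'$. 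With that small streamlining your argument is complete and matches the approach of the original references.
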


\begin{rem} It was proved in \cite{DM} that they coincide with the variational eigenvalues given by equation \eqref{formuvar}. However, let us observe that the notation is different in both papers.
\end{rem}

The function $\sin_s(x)$ is the solution of the initial value problem
$$
-(|u'|^{s -2} u')' =  (s-1) |u|^{s -2}u
$$
$$
u(0)=0, \qquad u'(0)=1,
$$
and is defined implicitly as
$$
x =  \int_0^{\sin_s(x)} \frac{dt}{(1-t^s)^{1/s}}.
$$
Moreover, its first zero is $\pi_s$, given by
$$
\pi_s = 2 \int_0^1 \frac{dt}{(1-t^s)^{1/s}}.
$$

Let us note that both $\sin_{s}(x)$ and $\sin_{s}'(x)$ satisfy
$$
|\sin_{s}(x)|\le 1, \quad |\sin_{s}'(x)|\le 1,
$$
due to the Pythagorean like identity
\begin{equation}\label{pyt}
|\sin_{s}(x)|^{s} + |\sin_{s}'(x)|^{s} = 1.
\end{equation}

Finally, let us observe that the following integral is a constant depending only on $s$:
$$
\int_0^{\pi_{s}} \sin_{s}^{s}(t)\, dt = K(s).
$$

\subsection{The Spectral Counting Function} Given the sequence $\{\lambda_k\}_{k\in \N}$, we introduce the spectral counting function $N(\lam)$  defined as
$$
N(\lam) =  \#\{k \colon \lam_k \le \lam \}.
$$

To avoid confusion, we will use $N_{sys}(\lam)$ or $N_s(\lam)$ to denote the eigenvalue counting functions of the system and the $s-$Laplace respectively. If necessary, we will write $N(\lam, \Omega)$ to denote explicitly the set $\Omega$ where
the eigenvalue problem is considered, and even $N^D(\lam)$ or $N^N(\lam)$ to indicate the Dirichlet and Neumann boundary conditions.

The main tool in order to obtain the asymptotic expansion of $N(\lam)$ is the classical Dirichlet-Neumann bracketing introduced by Courant \cite{CH} in a version due to \cite{FBP1}:
\begin{prop}[\cite{FBP1}, Theorem 2.1]
\label{dirneu}
Let $U_1, U_2 \in \R^N$ be disjoint open sets such that $(\overline{U_1 \cup U_2})^{int} = U $ and $|U \setminus U_1 \cup U_2|= 0$. Then,
\begin{align*}
N^D(\lam, U_1)+ N^D(\lam, U_2) &= N^D(\lam, U_1 \cup U_2 )\\
& \le N^D(\lam, U)\\
&\le  N^N(\lam, U)\\
& \le N^N(\lam, U_1 \cup U_2) \\
& =N^N(\lam, U_1)+ N^N(\lam, U_2).
\end{align*}
\end{prop}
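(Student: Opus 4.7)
The plan is to derive every (in)equality in the chain from the variational characterization \eqref{varsist} by a single monotonicity principle: if $\iota:X\hookrightarrow Y$ is a linear injection of admissible pair-spaces that preserves both integrals in the Rayleigh quotient, then by monotonicity of the Krasnoselskii genus $\iota(C)\in\mathcal{C}_k(Y)$ whenever $C\in\mathcal{C}_k(X)$, so $\lam_k(Y)\le\lam_k(X)$, i.e.\ $N(\lam,X)\le N(\lam,Y)$.

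The three middle inequalities are three instances of this principle. Extension by zero produces an isometric injection $W^{1,p}_0(U_1\cup U_2)\times W^{1,q}_0(U_1\cup U_2)\hookrightarrow W^{1,p}_0(U)\times W^{1,q}_0(U)$, preserving the coupling integral, and yields $N^D(\lam,U_1\cup U_2)\le N^D(\lam,U)$. The tautological inclusion $W^{1,p}_0(U)\subset W^{1,p}(U)$ (and likewise for $q$) gives $N^D(\lam,U)\le N^N(\lam,U)$. Restriction to $U_1\cup U_2$ is an isometric injection $W^{1,p}(U)\hookrightarrow W^{1,p}(U_1\cup U_2)$, well-defined because $|U\setminus(U_1\cup U_2)|=0$, and yields $N^N(\lam,U)\le N^N(\lam,U_1\cup U_2)$. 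In all three steps the integrals in \eqref{varsist} are preserved because any discrepancy lies on a set of zero Lebesgue measure.

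For the two equalities at the endpoints, I split any admissible pair on the disjoint union as $u=u_1+u_2$, $v=v_1+v_2$ with $(u_i,v_i)$ supported in $U_i$, so the Rayleigh quotient becomes
\[
R(u,v)=\frac{A_1+A_2}{B_1+B_2},\qquad A_i=\tfrac{1}{p}\int_{U_i}|\nabla u_i|^p+\tfrac{1}{q}\int_{U_i}|\nabla v_i|^q,\quad B_i=\int_{U_i}|u_i|^\alpha|v_i|^\beta.
\]
One direction is routine: a disjoint sum of admissible compact symmetric sets on $U_1$ and $U_2$ of genera $k_1$ and $k_2$ produces a set of genus $\ge k_1+k_2$ on $U_1\cup U_2$ whose Rayleigh quotient is still $\le\lam$, giving $N(\lam,U_1)+N(\lam,U_2)\le N(\lam,U_1\cup U_2)$. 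The reverse inequality is the step I anticipate as the main obstacle: given a compact symmetric $C$ of genus $\ge k$ on $U_1\cup U_2$ with $\sup_C R\le\lam$, one must decompose $C$ according to which component dominates the Rayleigh quotient and apply genus-subadditivity to redistribute the count between the two sides. The coupling $|u|^\alpha|v|^\beta$ in the denominator makes this decomposition subtler than in the scalar setting of \cite{FBP1}, since each element of the pair-space is effectively a quadruple $(u_1,u_2,v_1,v_2)$; nevertheless, it should reduce to a careful application of the standard genus properties, following the template of \cite{FBP1}.
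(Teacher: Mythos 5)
First, a point of reference: the paper itself does not prove this proposition --- it is imported verbatim from \cite{FBP1}, Theorem 2.1 --- so there is no internal argument to compare yours against; what follows judges your proposal on its own terms. The three middle inequalities are handled correctly and completely: extension by zero, the inclusion $W^{1,p}_0\subset W^{1,p}$, and restriction to $U_1\cup U_2$ (injective precisely because $|U\setminus (U_1\cup U_2)|=0$) are each odd continuous maps that preserve both integrals in \eqref{varsist} and do not decrease the genus, which yields the monotonicity of the counting functions. That part is fine.

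The genuine gap is in the two endpoint equalities, and it is twofold. (i) The direction you call routine does not work as literally stated: the genus of a union is only \emph{sub}additive, $\gamma(C_1\cup C_2)\le \gamma(C_1)+\gamma(C_2)$, so a ``disjoint sum'' of sets of genera $k_1$ and $k_2$ need not have genus $\ge k_1+k_2$. One must instead use a join-type set, e.g.\ pairs $\bigl(a^{1/p}u_1+b^{1/p}u_2,\,a^{1/q}v_1+b^{1/q}v_2\bigr)$ with $a+b=1$, $a,b\ge 0$, where the exponents $1/p$ and $1/q$ are forced by $\alpha/p+\beta/q=1$ so that the Rayleigh quotient of the combination stays below the larger of the two suprema, together with the superadditivity of the genus for joins of sets lying in complementary subspaces. (ii) The reverse inequality $N(\lam,U_1\cup U_2)\le N(\lam,U_1)+N(\lam,U_2)$ --- which is the direction actually needed for the Neumann upper bound in the proof of Theorem \ref{orden} --- is explicitly left open in your proposal. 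The standard route is to write $R=(A_1+A_2)/(B_1+B_2)$ on $C$, observe by the mediant inequality that every $c\in C$ satisfies $A_i/B_i\le\lam$ for at least one $i$ with $B_i>0$, cover $C$ by the corresponding (closed) pieces $C_1,C_2$, push each $C_i$ into the admissible class over $U_i$ by the odd continuous map $c\mapsto(u_i,v_i)$ to conclude $\gamma(C_i)\le N(\lam,U_i)$, and finish with subadditivity of the genus; one must check along the way that the map avoids the zero pair and that the coupled denominator $\int|u_i|^\alpha|v_i|^\beta$ is positive on $C_i$. Until both (i) and (ii) are carried out (or the result is simply cited from \cite{FBP1}, adapted to the system), the proposal does not establish the proposition.
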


\section{Estimates for the Spectral Counting Function}

Let us begin with the following scaling argument. We denote by $Q_1$ the unit cube in $\R^N$ and by $Q_t$ the scaled cube of side length $t$.
\begin{lema}\label{lemdir}
Let $\lam_1^1(p,q)$ (resp., $\lam_1^t(p,q)$) be the first Dirichlet eigenvalue of problem \ref{uno} when $\Omega = Q_1$ (resp.,  $\Omega = Q_t$). Then,
$$
t^{\alpha+\beta} \lam_1^t(p,q) = \lam_1^1(p,q).
$$
\end{lema}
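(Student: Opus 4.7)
The plan is to prove this by a direct change of variables in the Rayleigh quotient from \eqref{varsist}, exploiting the resonance hypothesis \eqref{alfabeta}. Throughout, write $R^{\Omega}[u,v]$ for the Rayleigh quotient on $\Omega$, so that $\lam_1^1(p,q)=\inf R^{Q_1}$ and $\lam_1^t(p,q)=\inf R^{Q_t}$.

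Given $(u,v)\in W^{1,p}_0(Q_1)\times W^{1,q}_0(Q_1)$, I would define the scaled pair $(U,V)$ on $Q_t$ by
\[
U(y)=t\,u(y/t), \qquad V(y)=t\,v(y/t),
\]
which is clearly a bijection between the admissible test spaces on $Q_1$ and on $Q_t$. The next step is to evaluate the three integrals appearing in $R^{Q_t}[U,V]$ via the substitution $x=y/t$. Because the amplitude factor $t$ exactly cancels the chain-rule factor $1/t$ from differentiating, one has $|\nabla U(y)|^p=|\nabla u(y/t)|^p$ and $|\nabla V(y)|^q=|\nabla v(y/t)|^q$, so after the Jacobian $t^N$ appears these become
\[
\int_{Q_t}|\nabla U|^p\,dy=t^N\int_{Q_1}|\nabla u|^p\,dx, \qquad \int_{Q_t}|\nabla V|^q\,dy=t^N\int_{Q_1}|\nabla v|^q\,dx.
\]
The coupling integral picks up an additional $t^{\alpha+\beta}$ from the amplitudes in $|U|^\alpha|V|^\beta$, giving
\[
\int_{Q_t}|U|^{\alpha}|V|^{\beta}\,dy = t^{N+\alpha+\beta}\int_{Q_1}|u|^{\alpha}|v|^{\beta}\,dx.
\]

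The common $t^N$ factors in numerator and denominator cancel, leaving the clean relation
\[
R^{Q_t}[U,V]=t^{-(\alpha+\beta)}\,R^{Q_1}[u,v].
\]
Taking infima over the admissible class (legitimate since the assignment $(u,v)\leftrightarrow(U,V)$ is a bijection onto the corresponding $\mathcal{C}_1$) yields $\lam_1^t(p,q)=t^{-(\alpha+\beta)}\lam_1^1(p,q)$, which is the claimed identity. I do not expect any genuine obstacle; the only point worth emphasizing is that the simple common amplitude choice $A=B=t$ manages to rescale both gradient integrals by the same power of $t$, and this is precisely a consequence of the resonance condition \eqref{alfabeta}: without it the numerator would not scale homogeneously and no single power of $t$ could control the Rayleigh quotient.
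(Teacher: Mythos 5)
Your argument is correct and coincides with the paper's own proof: the same scaling $(u,v)\mapsto\bigl(tu(\cdot/t),tv(\cdot/t)\bigr)$, the same change of variables showing the Rayleigh quotient picks up the factor $t^{-(\alpha+\beta)}$, and the same passage to the infimum via the bijection of test pairs. One small caveat on your closing aside: with the common amplitude $t$ matching the spatial dilation, both gradient integrals acquire the factor $t^{N}$ automatically and independently of \eqref{alfabeta}; the resonance condition is rather what makes the Rayleigh quotient invariant under the normalization $(u,v)\mapsto(c^{1/p}u,c^{1/q}v)$, i.e.\ what makes the eigenvalue problem well posed, not what makes this particular scaling homogeneous.
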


\begin{proof}
First, observe that given any pair $(u,v)\in W^{1,p}_0(Q_1)\times W^{1,q}_0(Q_1)$, if we define
$$
u_t = tu\left(\frac{x}{t}\right),\qquad v_t = tv\left(\frac{x}{t}\right),
$$
then, $(u_t, v_t) \in  W^{1,p}_0(Q_t)\times W^{1,q}_0(Q_t)$. Also, it is clear that $(u,v)\mapsto (u_t, v_t)$ is a bijection between $W^{1,p}_0(Q_1)\times W^{1,q}_0(Q_1)$ and $W^{1,p}_0(Q_t)\times W^{1,q}_0(Q_t)$.

Now, by simple change of variables, we get
$$
\frac{\displaystyle\frac{1}{p}\int_{Q_t}|\nabla u_t|^{p}\, dx +
\frac{1}{q}\int_{Q_t}|\nabla v_t|^{q}\, dx}{\displaystyle\int_{Q_t}
|u_t|^{\alpha}|v_t|^{\beta}\, dx} = \frac{1}{t^{\alpha+\beta} } \displaystyle{ \frac{\displaystyle\frac{1}{p}\int_{Q_1}|\nabla u|^{p}\, dx +
\frac{1}{q}\int_{Q_1}|\nabla v|^{q}\, dx}{\displaystyle\int_{Q_1}
|u|^{\alpha}|v|^{\beta}\, dx} }.
$$

Finally, the variational characterization of the first eigenvalue gives the desired result, namely
$$
\lambda_1^t(p,q) = \frac{\lambda_1^1(p,q)}{t^{\alpha+\beta}}.
$$
This finishes the proof.
\end{proof}

\begin{rem} The argument used in the proof shows that if $(u,v)$ is an eigenfunction associated to $\lambda_1^1(p,q)$ then $(u_t,v_t)$ is an eigenfunction associated to $\lambda_1^t(p,q)$.
\end{rem}

By applying the very same argument we obtain the following relation between the second Neumann eigenvalues (recall that the first Neumann eigenvalue is $\mu_1^t(p,q)=0$).

\begin{lema}\label{lemneu}
Let $\mu_2^1(p,q)$ (resp., $\mu_2^t(p,q)$) be the second Neumann eigenvalue of problem \eqref{uno} when $\Omega = Q_1$ (resp., $\Omega = Q_t$). Then,
$$
\mu_2^t(p,q) = \frac{\mu_2^1(p,q)}{t^{\alpha+\beta} }.
$$
\end{lema}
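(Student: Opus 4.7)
The plan is to adapt verbatim the scaling argument used in the proof of Lemma \ref{lemdir}, the only differences being (i) that we work in $W^{1,p}(Q_t)\times W^{1,q}(Q_t)$ instead of the spaces with zero trace, which makes no difference since the scaling map respects no boundary condition, and (ii) that we need the correct variational characterization of the second Neumann eigenvalue, since the min–max over $\mathcal{C}_2$ would merely recover $\mu_1=0$ on account of constant pairs.

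First I would fix the same dilation $(u,v)\mapsto (u_t,v_t)$ with $u_t(x)=t\,u(x/t)$ and $v_t(x)=t\,v(x/t)$, observing that it defines a bijection between $W^{1,p}(Q_1)\times W^{1,q}(Q_1)$ and $W^{1,p}(Q_t)\times W^{1,q}(Q_t)$ (with inverse $(w,z)\mapsto (w_{1/t}, z_{1/t})$). An identical change of variables as in Lemma \ref{lemdir} then yields
$$
\frac{\displaystyle\frac{1}{p}\int_{Q_t}|\nabla u_t|^p\,dx+\frac{1}{q}\int_{Q_t}|\nabla v_t|^q\,dx}{\displaystyle\int_{Q_t}|u_t|^\alpha|v_t|^\beta\,dx}
= \frac{1}{t^{\alpha+\beta}}\cdot
\frac{\displaystyle\frac{1}{p}\int_{Q_1}|\nabla u|^p\,dx+\frac{1}{q}\int_{Q_1}|\nabla v|^q\,dx}{\displaystyle\int_{Q_1}|u|^\alpha|v|^\beta\,dx}.
$$
This is the routine part and follows because both gradients scale with a factor of $t^{-1}$ and the spatial measure contributes a factor $t^N$ that cancels consistently on both sides after pulling out $t^{\alpha+\beta}$.

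Second, I would write the variational characterization that gives $\mu_2$. One convenient form (consistent with the genus-based min--max used throughout the paper) is
$$
\mu_2(p,q)=\inf_{C\in\mathcal{C}_2^\ast}\sup_{(u,v)\in C}
\frac{\frac{1}{p}\int|\nabla u|^p+\frac{1}{q}\int|\nabla v|^q}{\int |u|^\alpha|v|^\beta\,dx},
$$
where $\mathcal{C}_2^\ast$ is the class of compact symmetric subsets $C$ of genus $\ge 2$ on which the Rayleigh quotient is well defined (equivalently, subsets avoiding pairs with $\int|u|^\alpha|v|^\beta=0$, which rules out the zero–quotient directions associated with $\mu_1=0$). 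The key observation is that the dilation $(u,v)\mapsto (u_t,v_t)$ sends $\mathcal{C}_2^\ast$ on $Q_1$ bijectively onto the analogous class on $Q_t$: it is a homeomorphism that preserves compactness, symmetry, genus and the positivity of $\int|u|^\alpha|v|^\beta$. Combining this bijection with the scaling identity of the Rayleigh quotient yields
$$
\mu_2^t(p,q)=\frac{\mu_2^1(p,q)}{t^{\alpha+\beta}},
$$
exactly as in Lemma \ref{lemdir}.

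The only nontrivial step is the second one: one must be careful to phrase the variational characterization of $\mu_2$ in a way that is manifestly invariant under the dilation and that excludes the trivial first level $\mu_1=0$. Once the right class $\mathcal{C}_2^\ast$ is identified, the rest of the proof is the same line-by-line computation performed in Lemma \ref{lemdir}.
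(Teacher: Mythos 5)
Your proposal is correct and follows essentially the same route as the paper, whose proof of this lemma consists precisely of repeating the scaling computation of Lemma \ref{lemdir} on the Neumann spaces together with the min--max characterization of $\mu_2$ over genus-$2$ classes. Your additional care about which admissible class $\mathcal{C}_2^\ast$ to use (the paper simply writes the inf--sup over $\mathcal{C}_2$) is a reasonable refinement, though note that excluding pairs with $\int|u|^\alpha|v|^\beta=0$ only makes the quotient well defined and does not by itself remove constant pairs, which have positive denominator and zero numerator; in any case the dilation preserves whichever admissible class one fixes, so the scaling identity carries over unchanged.
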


\begin{proof}
The proof follows exactly as the one in Lemma \ref{lemdir} and the variational characterization of the second Neumann eigenvalue, namely

\begin{equation}
\mu_2^t(p,q) = \inf_{C_t\in \mathcal{C}_2} \sup_{(u_t,v_t)\in C_t}
\frac{\displaystyle\frac{1}{p}\int_{\Omega}|\nabla u_t|^{p}\, dx +
\frac{1}{q}\int_{\Omega}|\nabla v_t|^{q}\, dx}{\displaystyle\int_{\Omega}
|u_t|^{\alpha}|v_t|^{\beta}\, dx}.
\end{equation}
The proof is now completed.
\end{proof}

Now we are ready to prove Theorem \ref{orden}.

\begin{proof}[Proof of Theorem \ref{orden}]
For any $\lambda$ fixed, we take a lattice of cubes of side length $t\ll 1$ in $\R^N$ with $t$ depending on $\lam$.

First we derive a lower bound for $N(\lam)$ (equivalently, an upper  bound for $\lam_k$). From Lemma \ref{lemdir},
$\lam_1^t(p,q) = t^{-(\alpha+\beta)} \lam_1^1 (p,q)$, and taking
$$
t = \left(\frac{\lam}{\lam_1^1 (p,q)}\right)^{-\frac{1}{\alpha+\beta}},
$$
we have
$$
\lam_1^t(p,q) = \lambda
$$
Hence, since the first eigenvalue $\lambda_1^t(p,q)$ has multiplicity two (observe that $(u,v)$ is an eigenfunction if and only if $(u,-v)$ is an eigenfunction), we have $N^D(\lambda, Q_t)=2$. 

By using the Dirichlet Neumann bracketing, a lower bound for $N(\lam)$ is given by $2M$, where $M$ is the number of cubes of the lattice contained in $\Omega$. Since
$$
t^N M \to |\Omega|
$$
when $t\to 0$, we have
$$
N(\lam) \ge  2M \sim 2\frac{|\Omega|}{t^N} = 2|\Omega| \left(\frac{\lam}{\lam_1^1 (p,q)}\right)^{\frac{N}{\alpha+\beta}}.
$$

The upper bound for $\lam_k(p,q)$ follows since
$$
k = N(\lam_k(p,q)) \ge \frac{2|\Omega|}{\lam_1^1 (p,q)^{\frac{N}{\alpha+\beta}}} \, \lam_k(p,q)^{\frac{N}{\alpha+\beta}}.
$$

Let us find an upper bound for $N(\lam)$. We use the bound for the second Neumann eigenvalue proved in Lemma \ref{lemneu},
$\mu_2^t(p,q) = t^{-(\alpha+\beta)} \mu_2^1(p,q)$. Hence, taking
$$
t = \left(\frac{c\lam}{\mu_2^1(p,q)}\right)^{-\frac{1}{\alpha+\beta}},
$$
for any $c>1$, we have
$$
\mu_2^t(p,q) = c\lambda > \lambda.
$$
Therefore, for each cube we have $N^N(\lambda, Q_t)=1$. 

By using again the Dirichlet Neumann bracketing, an upper bound for $N(\lam)$ is given by $M$, where $M$ is the number of cubes covering $\Omega$.  As before, we have
$$
N(\lam) \le M \sim \frac{|\Omega|}{t^N} = |\Omega| \left(\frac{c \lam}{ \mu_2^1 (p,q)}\right)^{\frac{N}{\alpha+\beta}}.
$$

The lower bound for $\lam_k(p,q)$ follows  since
$$
k = N(\lam_k(p,q)) \le \frac{|\Omega|}{\mu_2^1 (p,q)^{\frac{N}{\alpha+\beta}}} \, (c \lam_k(p,q))^{\frac{N}{\alpha+\beta}}.
$$

The Theorem is proved.\end{proof}

\section{A Lower Bound for the First Eigenvalue}

In this section we restrict ourselves to the one dimensional case and prove Theorem \ref{lower}. For that purpose, we use the following Lyapunov inequality for
systems proved in \cite{dNP}:
\begin{thm}[\cite{dNP}, Theorem 1.5]\label{lyap}
Let us assume that there exists a positive solution of the system
$$
\left\{ \begin{array}{rcl}
-(|u^{\prime}(x)|^{p-2}u^{\prime}(x))^{\prime} & = & f(x)|u|^{\alpha-2}u|v|^{\beta}\\
-(|v^{\prime}(x)|^{q-2}v^{\prime}(x))^{\prime} & = &
g(x)|u|^{\alpha}|v|^{\beta-2}v\end{array}\right.
$$
on the interval $(a,b)$, with Dirichlet boundary conditions. Then, we have that:
\begin{equation}\label{lyapsys}
2^{\alpha+\beta}\leq(b-a)^{\frac{\alpha}{p^{\prime}}+\frac{\beta}{q^{\prime}}}
\left(\int_{a}^{b}f(x)\, dx\right)^{\frac{\alpha}{p}} \left(\int_{a}^{b}g(x)\,
dx\right)^{\frac{\beta}{q}}
\end{equation}
\end{thm}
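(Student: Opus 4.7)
The plan is to combine two classical ingredients: a scalar Lyapunov-type lower bound for $\int |u'|^p$ (respectively $\int |v'|^q$) in terms of $\|u\|_\infty$ (respectively $\|v\|_\infty$), and an algebraic rearrangement that uses the resonance condition $\alpha/p+\beta/q=1$ to eliminate both sup-norms from the final inequality. The balance identity is exactly what makes the exponents on the two sup-norms cancel at the end.

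First, I would test the first equation with $u$ and the second with $v$, integrating by parts and using the Dirichlet boundary conditions to obtain the identities
\begin{equation*}
\int_a^b |u'|^p\,dx = \int_a^b f(x)|u|^{\alpha}|v|^{\beta}\,dx,\qquad \int_a^b |v'|^q\,dx = \int_a^b g(x)|u|^{\alpha}|v|^{\beta}\,dx.
\end{equation*}
On the right hand sides I would use the pointwise bounds $|u|\le\|u\|_\infty$ and $|v|\le\|v\|_\infty$ to factor out $\|u\|_\infty^\alpha\|v\|_\infty^\beta$. Next, for $u$ alone, I would reproduce the one-line scalar $p$-Laplace Lyapunov argument: since $u>0$ vanishes at both endpoints, it attains its maximum at some interior $x_0$; the two representations $u(x_0)=\int_a^{x_0}u'\,dt=-\int_{x_0}^b u'\,dt$ combined with H\"older in each half give
\begin{equation*}
\|u\|_\infty^p\left(\frac{1}{(x_0-a)^{p-1}}+\frac{1}{(b-x_0)^{p-1}}\right) \le \int_a^b |u'|^p\,dx,
\end{equation*}
and the one-variable minimum of the bracket, achieved at the midpoint, turns this into $\int_a^b|u'|^p\,dx\ge 2^p(b-a)^{-(p-1)}\|u\|_\infty^p$; the analogous inequality holds for $v$ with $q$.

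Feeding these two Lyapunov bounds back into the two testing identities yields
\begin{equation*}
\frac{2^p}{(b-a)^{p-1}}\,\|u\|_\infty^{p-\alpha}\|v\|_\infty^{-\beta}\le \int_a^b f\,dx,\qquad \frac{2^q}{(b-a)^{q-1}}\,\|u\|_\infty^{-\alpha}\|v\|_\infty^{q-\beta}\le \int_a^b g\,dx.
\end{equation*}
Raising the first inequality to the power $\alpha/p$, the second to $\beta/q$, and multiplying, the total exponent of $\|u\|_\infty$ becomes $\alpha-\alpha(\alpha/p+\beta/q)=0$ and that of $\|v\|_\infty$ becomes $\beta-\beta(\alpha/p+\beta/q)=0$ thanks to the resonance hypothesis, while the prefactor reduces to $2^{\alpha+\beta}(b-a)^{-(\alpha/p'+\beta/q')}$, giving the claim after rearrangement. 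I do not anticipate a serious obstacle beyond this algebraic bookkeeping. The small technical points deserving care are the justification that $u$ and $v$ attain their sup in the open interval (guaranteed by positivity plus the Dirichlet data), and the convexity argument that locates the minimum of $x\mapsto (x-a)^{-(p-1)}+(b-x)^{-(p-1)}$ at $x=(a+b)/2$.
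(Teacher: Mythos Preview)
The paper does not actually prove this statement; it is quoted verbatim from \cite{dNP} (De N\'apoli--Pinasco) and used as a black box to derive Theorem~\ref{lower}. So there is no in-paper proof to compare against.

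That said, your argument is correct and is essentially the standard route to this inequality: test each equation against its own unknown, invoke the scalar $p$-Lyapunov bound $\int|u'|^p\ge 2^p(b-a)^{1-p}\|u\|_\infty^p$ obtained via H\"older on the two half-intervals, and then take the weighted geometric mean with exponents $\alpha/p$ and $\beta/q$ so that the resonance condition kills both sup-norms. The algebra you wrote checks out: the $\|u\|_\infty$ exponent is $\tfrac{\alpha}{p}(p-\alpha)-\tfrac{\beta}{q}\alpha=\alpha(1-\tfrac{\alpha}{p}-\tfrac{\beta}{q})=0$, and likewise for $\|v\|_\infty$; the numerical prefactor is $2^{\alpha+\beta}(b-a)^{-(\alpha/p'+\beta/q')}$ since $(p-1)/p=1/p'$.

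One point worth flagging: the step $\int_a^b f\,|u|^\alpha|v|^\beta\,dx \le \|u\|_\infty^\alpha\|v\|_\infty^\beta\int_a^b f\,dx$ tacitly uses $f\ge 0$ (and similarly $g\ge 0$). The theorem as stated here does not list that hypothesis, but it is the standing assumption in \cite{dNP} and is automatically satisfied in the application made in this paper ($f\equiv\alpha\lambda_1$, $g\equiv\beta\lambda_1$). You should state it explicitly when you write the proof up.
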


This result gives Theorem \ref{lower} after replacing $f(x)= \alpha \lam_1(p,q)$ and $g(x)=\beta \lam_1(p,q)$. In fact, we have
\begin{equation}\label{lyapsyseig}
2^{\alpha+\beta}\leq  \alpha^{\frac{\alpha}{p}}\beta^{\frac{\beta}{q}}
(b-a)^{\frac{\alpha}{p^{\prime}}+\frac{\beta}{q^{\prime}}+
\frac{\alpha}{p}+\frac{\beta}{q}} \lam_1(p,q),
\end{equation}
and let us note that
$$
\frac{\alpha}{p^{\prime}}+\frac{\beta}{q^{\prime}}+
\frac{\alpha}{p}+\frac{\beta}{q} = \alpha\left(
\frac{1}{p}+\frac{1}{p^{\prime}}\right) + \beta\left( \frac{1}{q}+
\frac{1}{q^{\prime}}\right) = \alpha + \beta.
$$
So,
$$
2^{\alpha+\beta}\leq \alpha^{\frac{\alpha}{p}} \beta^{\frac{\beta}{q}} (b-a)^{\alpha + \beta} \lam_1(p,q).
$$

The desired result follows from this inequality and the explicit formula for $\Lambda_1(\alpha+\beta)$ in Theorem \ref{draman}, since:
$$
\frac{2^{\alpha+\beta}}{(b-a)^{\alpha + \beta}} \frac{\alpha+\beta-1}{\alpha+\beta-1}
\left(\frac{\pi_{\alpha+\beta}}{ \pi_{\alpha+\beta}}\right)^{\alpha + \beta}=
\frac{1}{\alpha+\beta-1}  \left(\frac{2} {\pi_{\alpha+\beta}}\right)^{\alpha + \beta}
\Lambda_1(\alpha+\beta),
$$
and therefore
$$
\frac{1}{\alpha+\beta-1}  \left(\frac{2} {\pi_{\alpha+\beta}}\right)^{\alpha + \beta}
\Lambda_1(\alpha+\beta) \le
\alpha^{\frac{\alpha}{p}}\beta^{\frac{\beta}{q}}\lam_1(p,q).
$$

This completes the proof of Theorem \ref{lower}.\qed

\section{An Upper Bound for the First Eigenvalue}

In this section, we first consider the one dimensional case and prove Theorem \ref{upper}.
Then, we extend the bound to the $n-$dimensional case by means of comparing the eigenvalue of the $p-$Laplacian with that of the pseudo $p-$Laplacian.

For the upper bound of the first eigenvalue, we need to improve the bound given in \cite{dNP}.

Given the variational characterization of the first eigenvalue,
$$
\lambda_1(p,q) \leq \frac{\displaystyle \frac{1}{p}\int_0^L
|u'|^p\, dx + \frac{1}{q}\int_0^L | v'|^q\, dx} {\displaystyle \int_0^L |u|^\alpha
|v|^{\beta}\, dx}.
$$
We denote $s=\alpha + \beta$ and choose $u=v=\varphi_1$,  which is a multiple of the first Dirichlet eigenfunction of the single equation
$$
-(|w'|^{s-2} w')'= \Lambda  |w|^{s-2}w,
$$
with Dirichlet boundary conditions, that is
$$
\varphi_1(x) = \frac{L}{\pi_s} \sin_s(\pi_s x/L).
$$
Due to the Pythagorean--like identity \eqref{pyt} we have
$$
|\varphi_1'(x)|\le 1.
$$

So, taking $u=v=\varphi_1$ in the Rayleigh quotient we get
$$
\lambda_1(p,q) \leq \frac{\displaystyle \frac{1}{p}\int_0^L |\varphi_1'|^p\, dx + \frac{1}{q}\int_0^L |\varphi_1'|^q\, dx} {\displaystyle \int_0^L |\varphi_1|^s\, dx}
$$

Now, as $q<p$ we have that $s = \alpha+\beta<p$. Moreover, as $|\varphi_1'(x)| \le 1$, the inequality
$$
|\varphi_1'(x)|^p \le |\varphi_1'(x)|^s
$$
holds. On the other hand, by H\"older's inequality we obtain
$$
\int_0^L | \varphi_1'|^q\, dx \le \left(\int_0^L | \varphi_1'|^s\, dx\right)^{\frac{q}{s}}
L^{1-\frac{q}{s}}
$$

Collecting all these facts, we have
$$
\begin{array}{rl}
\lambda_1(p,q) & \leq \displaystyle \frac{1}{p} \frac{\displaystyle \int_0^L |\varphi_1'|^s\, dx} {\displaystyle \int_0^L |\varphi_1|^s\, dx} + \frac{1}{q} \frac{\displaystyle \left(\int_0^L |\varphi_1'|^s\, dx \right)^{\frac{q}{s}} L^{1-\frac{q}{s}}} {\displaystyle \int_0^L |\varphi_1|^s\, dx} \\ 
\\ 
&\displaystyle \leq \frac{1}{p}
\Lambda_1(s) + \frac{1}{q} \Lambda_1(s)^{\frac{q}{s}} \frac{L^{1-\frac{q}{s}}}{\displaystyle \left(\int_0^L |
\varphi_1|^s\, dx\right)^{1-\frac{q}{s}}}
\end{array}
$$

Now we need an upper bound for
$$
\left( \frac{L}{\displaystyle \int_0^L | \varphi_1|^s \, dx}\right)^{1-\frac{q}{s}}.
$$
Indeed, we can compute the integral explicitly, obtaining
$$
\begin{array}{rl}
\displaystyle{\int_0^L | \varphi_1|^s\, dx} & = \displaystyle{
\left(\frac{L}{\pi_s}\right)^s \int_0^L \sin_s^s(\pi_s x/L)dx} \\
\\ & \displaystyle{=  \left(\frac{L}{\pi_s}\right)^{s+1} \int_0^{\pi_s}
\sin_s^s(t)\, dt }\\ \\
& \displaystyle{=
\left(\frac{L}{\pi_s}\right)^{s+1} K(s).}
\end{array}
$$
Then,
$$
\lambda_1(p,q) \leq \frac{1}{p} \Lambda_1(s) + \frac{1}{q} \Lambda_1(s)^{\frac{q}{s}}
\left( \frac{\pi_s^{s+1}} {L^s K(s)} \right)^{1-\frac{q}{s}}.
$$

Now, by using the explicit formula for $\Lambda_1(s)$, we have
$$
\lambda_1(p,q) \leq \Lambda_1(s) \left[ \frac{1}{p}
+\frac{1}{q} \left( \frac{\pi_s} {(s-1)K(s)} \right)^{1-\frac{q}{s}}\right].
$$

The proof of Theorem \ref{upper} is finished.\qed

\medskip

Now we go back to the $N-$dimensional problem and try to find an explicit upper bound for the first eigenvalue $\lambda_1(p,q)$ in the unit cube $Q_1$. That is, we prove Theorem \ref{upper-N}.

In order to do this, we recall the pseudo $p-$Laplace operator
$$
\hat \Delta_p u = \sum_{i=i}^N (|u_{x_i}|^{p-2}u_{x_i})_{x_i}
$$
and consider the eigenvalue problem associated to $\hat \Delta_p$
$$
\nu_1 = \inf_{(u,v)\in W^{1,p}_0(Q_1)\times W^{1,q}_0(Q_1)} \frac{\displaystyle \int_{Q_1} \frac{1}{p} |\nabla u|_p^p + \frac{1}{q} |\nabla v|_q^q\, dx}{\displaystyle \int_{Q_1} |u|^\alpha |v|^\beta\, dx}
$$
where $|x|_s^s = \sum_{i=1}^N |x_i|^s$.

By the equivalence of norms
$$
\begin{array}{ll}
|x|_2 \le N^{(s-2)/2} |x|_s & \qquad \mbox{if } p>2,\\
|x|_2 \le |x|_s & \qquad \mbox{if } 1<p\le 2, 
\end{array}
$$
we have that $\lambda_1(p,q)\le C \nu_1$, where $C=\max\{1; N^{(p-2)/2}; N^{(q-2)/2}\}$. So we need to bound $\nu_1$.

Recall that the first eigenfunction of
$$
-\hat\Delta_s u = \nu_1 |u|^{s-2}u\qquad  \mbox{in } Q_1
$$
with Dirichlet boundary conditions can be computed explicitly by separation of variables.

In fact, let $\phi_s(x)=\sin_s(\pi_s x)$ be the first eigenfunction of the one dimensional $s-$Laplace operator with Dirichlet boundary conditions in the interval $(0,1)$, then
$$
w_s(x) = \prod_{i=1}^n \phi_s(x_i)
$$
is the first eigenfunction of the $N-$dimensional pseudo $s-$laplacian in the cube $Q_1$.

Now, as in the one dimensional case, we use $(w_s,w_s)$, with $s=\alpha+\beta$ as a test function for $\nu_1$ to get
\begin{align*}
\nu_1 \le& \frac{\displaystyle \frac{N}{p} \Big(\int_0^1 |\phi_s(t)|^p\, dt\Big)^{N-1} \int_0^1 |\phi_s'(t)|^p\, dt}{\displaystyle \Big(\int_0^1 |\phi_s(t)|^s\, dt\Big)^N} + \frac{\displaystyle \frac{N}{q} \Big(\int_0^1 |\phi_s(t)|^q\, dt\Big)^{N-1} \int_0^1 |\phi_s'(t)|^q\, dt}{\displaystyle \Big(\int_0^1 |\phi_s(t)|^s\, dt\Big)^N}.
\end{align*}

By the Pytagorian--like identity
$$
|\sin_s(t)|^s + |\sin_s'(t)|^s = 1
$$
we conclude that $|\sin_s(t)|, |\sin_s'(t)|\le 1$.

Also, as $q<s<p$ we get $|\phi_s(t)|^p\le |\phi_s(t)|^s$ and $|\phi_s'(t)|^p\le \pi_s^{p-s} |\phi_s'(t)|^s$

So
\begin{align*}
\nu_1&\le \frac{N}{p}\pi_s^{p-s}(s-1)\pi_s^s +  \frac{\displaystyle \frac{N}{q} \Big(\int_0^1 |\phi_s(t)|^q\, dt\Big)^{N-1} \int_0^1 |\phi_s'(t)|^q\, dt} {\displaystyle \Big(\int_0^1 |\phi_s(t)|^s\, dt\Big)^N}\\
&= \frac{N(s-1)}{p}\pi_s^p +  \frac{\displaystyle \frac{N}{q} \Big(\int_0^1 |\phi_s(t)|^q\, dt\Big)^{N-1} \int_0^1 |\phi_s'(t)|^q\, dt} {\displaystyle \Big(\int_0^1 |\phi_s(t)|^s\, dt\Big)^N}.
\end{align*}

To bound the other term, we use H\"older's inequality to get
\begin{align*}
\Big(\int_0^1 |\phi_s(t)|^q\, dt\Big)^{N-1} \le& \Big(\int_0^1 |\phi_s(t)|^s\, dt\Big)^{(N-1)q/s},\\
\int_0^1 |\phi_s'(t)|^q\, dt \le& \Big(\int_0^1|\phi_s'(t)|^s\, dt\Big)^{q/s} = \Big((s-1)\pi_s^s \int_0^1 |\phi_s(t)|^s\, dt\Big)^{q/s}.
\end{align*}

So
$$
\nu_1 \le \frac{N(s-1)}{p}\pi_s^p + (s-1)^{q/s}\pi_s^q \Big(\int_0^1 |\phi_s(t)|^s\, dt\Big)^{-N(1-q/s)}
$$
which is an explicit bound for $\nu_1$.
\qed

\section{An explicit lower bound on the Main Theorem}

In Section \S 3 we obtained the following asymtotic bounds for the spectral counting function:
$$
\frac{2|\Omega|}{\lam_1^1 (p,q)^{\frac{N}{\alpha+\beta}}} \, \lam^{\frac{N}{\alpha+\beta}} \le N(\lam)
\le  \frac{|\Omega|}{\mu_2^1 (p,q)^{\frac{N}{\alpha+\beta}}} \, (c \lam)^{\frac{N}{\alpha+\beta}}.
$$

An explicit lower bound was given in the previous section by means of the explicit upper bound for $\lambda_1^1(p,q)$.

Although the previous formula holds for any constant $c>1$, it is convenient to take $c= 1+ \lam^{-1}$, since in this case we can rewrite the upper bound as
$$
\frac{2|\Omega|}{\mu_2^1 (p,q)^{\frac{N}{\alpha+\beta}}} \, \lam^{\frac{N}{\alpha+\beta}} + O(1).
$$

We conjecture that a stronger result holds, namely,
$$
N(\lam) = c(\Omega, \alpha,\beta)\lam^{\frac{N}{\alpha+\beta}} + o(\lam^{\frac{N}{\alpha+\beta}}).
$$
Indeed, the proof follows immediately if it is true that
$$
\lam_1^1 (p,q) = 2\mu_2^1 (p,q).
$$
For a single equation, the factor 2 does not enter since the first eigenvalue is simple. In this case, the equality $\lambda_1=\mu_2$ holds for a single equation in dimension one. Up to our knowledge, it is not known for systems, nor in the case $N>1$ even for a single $p-$Laplace equation.

\section*{Acknowledgements}  Supported by grant X078 from Universidad de Buenos Aires, grants 06-835, 06-290 from ANPCyT PICT. Both authors are members of CONICET (Argentina).

\end{document}